\documentclass[a4paper,12pt]{article}
\usepackage{comment}
\usepackage{cite}
\usepackage{amsmath}
\usepackage{amssymb}
\usepackage{amsfonts}
\usepackage[T1]{fontenc}
\usepackage[utf8]{inputenc}
\usepackage{graphicx}
\usepackage{fancyhdr}
\usepackage{float}
\usepackage{xcolor}
\usepackage{authblk}
\usepackage{mathrsfs}
\usepackage{empheq}
\usepackage[hyphens]{url}
\usepackage{hyperref}
\usepackage[]{breakurl}
\usepackage[]{amsthm}
\usepackage{tikz}
\usetikzlibrary{decorations.markings}

\usepackage{geometry}
\theoremstyle{plain}
\newtheorem{theorem}{Theorem}[section]

\newtheorem{lemma}[theorem]{Lemma}

\theoremstyle{definition}

\theoremstyle{remark}

\begin{document}

\title{Touchard's identity and a detailed determination of the radius of convergence of the Catalan series}

\author[$\dagger$]{Jean-Christophe {\sc Pain}\footnote{jean-christophe.pain@cea.fr}\\
\small
$^1$CEA, DAM, DIF, F-91297 Arpajon, France\\
$^2$Universit\'e Paris-Saclay, CEA, Laboratoire Mati\`ere en Conditions Extr\^emes,\\ 
F-91680 Bruy\`eres-le-Ch\^atel, France
}

\date{}

\maketitle

\begin{abstract}
While the value of the radius of convergence of the generating series of the Catalan numbers is well-known, obtaining it solely from recurrence relations is less immediate. It is sometimes considered that no known proof establishes that the radius $R$ equals 1/4 without relying on the explicit closed formula for the Catalan numbers. In particular, it has been shown that one can obtain, at the cost of substantial technical effort and without resorting to the main Segner recursion relation or to the explicit formula for the Catalan number, the lower bound $R\geq 1/6$. In this work, we prove that Touchard’s recurrence alone yields the optimal exponential upper bound $\limsup_{n\to\infty} C_n^{1/n} \le 4$, which implies \( R \ge 1/4 \).
Combined with the classical lower estimate $\limsup_{n\to\infty} C_n^{1/n} \ge 4$, obtained from central binomial coefficients, this gives \( R = 1/4 \).
\end{abstract}

\section{Introduction}\label{sec1}

The Catalan numbers form one of the most fundamental sequences in enumerative combinatorics \cite{Euler,Catalan,Comtet,Stanley,FlajoletSedgewick,Gould,Krattenthaler}. Implicitly introduced in the eighteenth century in the work of Euler and systematically studied in the nineteenth century by Eugène Catalan, they arise in a remarkable variety of distinct mathematical contexts. For $n\ge0$, the $n$th Catalan number
\begin{equation}\label{expli}
C_n=\frac{1}{n+1}\binom{2n}{n}
\end{equation}
counts, among many other objects: the complete parenthesizations of a product of $n+1$ factors, planar binary trees with $n$ internal nodes,
Dyck paths of length $2n$, triangulations of a convex polygon with $n+2$ sides, and permutations avoiding certain classical patterns.

This combinatorial ubiquity is accompanied by a striking analytic structure. The ordinary generating function \cite{Binet}:
\begin{equation}\label{quatre}
C(x)=\sum_{n\ge0} C_n x^n
\end{equation}
satisfies the quadratic algebraic equation
\[
C(x)=1+xC(x)^2,
\]
making the Catalan numbers a fundamental prototype of algebraic generating functions in analytic combinatorics. The asymptotic behavior
\[
C_n \sim \frac{4^n}{n^{3/2}\sqrt{\pi}}
\]
illustrates the universal critical exponent $3/2$ associated with square-root type singularities. Catalan numbers therefore play a central role not only in bijective combinatorics but also in complex analysis, the theory of random trees, and probabilistic models of discrete structures.

As mentioned by Villarino \cite{Villarino}, von Segner obtained, after being drawn to the subject by Euler, the recursion relation \cite{Segner}:
\begin{equation}\label{cinq}
C_0 = 1,
\qquad
C_{n+1} = \sum_{k=0}^n C_k C_{n-k}.
\end{equation}
together with a combinatorial proof. However, he apparently was unaware of Euler's explicit product formula (\ref{expli}) since he never mentions it. Instead he uses (\ref{cinq}) to directly compute $C_n$ for $n$ = 1, 2, $\cdots$, 18. 

It is worth mentioning that Stanley, in his excellent book devoted exclusively to Catalan numbers \cite{StanleyCatalan}, presents more than two hundred combinatorial interpretations of $C_n$. In 1967, Marshall Hall published a text on combinatorics \cite{Hall} in which he made on page 28 the following comment (quoted by Villarino \cite{Villarino}): ``We observe that an attempt to prove the convergence of Eq.~(\ref{quatre}) on the basis of Eq.~(\ref{cinq}) alone is exceedingly difficult.'' Hall offers no suggestions towards such a proof. Moreover, a search of the voluminous literature on Catalan numbers has failed to find such a proof. Therefore we offer a proof in this paper

The present work fits into this classical framework by providing a detailed determination, based on Touchard's identity \cite{Touchard}, of the radius of convergence of the Catalan generating function. The paper is organized as follows. In Section~\ref{sec2} we establish Touchard’s identity in a fully explicit manner, starting from the functional equation of the generating function and performing a careful coefficient extraction. Section~\ref{sec3} is devoted to the derivation of an exponential upper bound for the Catalan numbers based solely on this recurrence, leading to the inequality $\limsup C_n^{1/n} \le 4$. In Section~\ref{sec4} we complement this estimate with a matching lower bound obtained from classical inequalities for the central binomial coefficient, yielding $\limsup C_n^{1/n} \ge 4$. These two estimates are then combined through the Cauchy–Hadamard formula to deduce the exact value of the radius of convergence. Finally, the appendix discusses briefly the classical analytic approach based on the Segner recursion and the implicit function theorem, in order to contrast it with the recurrence-based method developed here.

\section{The Touchard identity}\label{sec2}

\begin{theorem}[Touchard]

For every $n\ge0$, the Catalan numbers satisfy the recursion relation
\[
C_{n+1}
=
\sum_{k=0}^{\lfloor n/2\rfloor}
\binom{n}{2k}\,2^{\,n-2k}\,C_k.
\]
\end{theorem}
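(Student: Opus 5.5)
The plan is to prove Touchard's identity using only the functional equation $C(x)=1+xC(x)^2$, treated as an identity of formal power series, and no closed formula. The idea is to rewrite the right-hand side as the coefficient of a generating function and show that this function equals $(C(x)-1)/x=\sum_{n\ge0}C_{n+1}x^n$.

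First, I identify the generating function of the right-hand side,
\[
T(x)=\sum_{n\ge0}\Bigl(\sum_{k}\binom{n}{2k}2^{n-2k}C_k\Bigr)x^n .
\]
Swap the order of summation and set $n=2k+j$. Then
\[
T(x)=\sum_{k\ge0}C_k x^{2k}\sum_{j\ge0}\binom{2k+j}{j}(2x)^j
=\sum_{k\ge0}\frac{C_k x^{2k}}{(1-2x)^{2k+1}},
\]
where the inner sum uses the negative binomial series $\sum_j\binom{m+j}{j}y^j=(1-y)^{-m-1}$. All of this is legitimate for formal power series, because $x^{2k}$ forces only finitely many $k$ to contribute to each coefficient. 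Writing $u=x^2/(1-2x)^2$, which has zero constant term so that substituting it into a power series is well defined, we get
\[
T(x)=\frac{1}{1-2x}\,C(u).
\]

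Second, it remains to show that $T=(C(x)-1)/x$, or equivalently $C(x)=1+xT(x)$. Rather than verifying this directly, I use uniqueness. The equation $F=1+xF^2$ has exactly one formal power series solution, because comparing coefficients determines them recursively; this recursion is exactly (\ref{cinq}). So it suffices to check that $G:=1+\frac{x}{1-2x}C(u)$ satisfies $G=1+xG^2$.

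Writing $D=C(u)$ and using $D=1+uD^2$, the equation $G-1=xG^2$ becomes
\[
\frac{xD}{1-2x}=x\Bigl(1+\frac{2xD}{1-2x}+\frac{x^2D^2}{(1-2x)^2}\Bigr).
\]
Multiplying by $(1-2x)/x$ gives
\[
D=(1-2x)+2xD+uD^2(1-2x)=(1-2x)+2xD+(D-1)(1-2x)=D.
\]
This is an identity, so $G=C$, and comparing coefficients of $x^{n+1}$ in $C=1+xT$ yields the theorem.

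The main obstacle is bookkeeping rather than conceptual: one must justify the interchange of sums and the composition $C(u)$ formally, which needs $u$ to have zero constant term, and must get the exponent $2k+1$ right in the negative binomial step. An alternative, if a combinatorial flavour is preferred, is a bijection on Motzkin-type paths: the $2^{n-2k}$ counts bicoloured level steps and the $\binom{n}{2k}$ chooses the positions of the up/down steps. The generating-function route above is shorter, however, and is consistent with the paper's stated plan of extracting coefficients from the functional equation.
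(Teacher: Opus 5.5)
Your proof is correct. It shares the paper's outer skeleton: the substitution $u=x^2/(1-2x)^2$, the identity $\sum_{n\ge0}C_{n+1}x^n=C(u)/(1-2x)$, the negative binomial expansion of $(1-2x)^{-(2k+1)}$, and coefficient extraction. Where you differ is in how you establish that central identity.

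The paper starts from $C(x)^2$ and inserts the closed form $C(x)=(1-\sqrt{1-4x})/(2x)$. It then uses the factorization $\sqrt{1-4x}=(1-2x)\sqrt{1-4u}$ to rewrite $C(x)^2$ as $C(u)/(1-2x)$. You run the computation in the opposite direction. You sum the right-hand side to get $T=C(u)/(1-2x)$, then show $1+xT=C$ by checking that it solves $F=1+xF^2$ and invoking uniqueness of the formal solution (which is Segner's recursion).

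The paper's route is a direct derivation, and it makes the origin of $u$ transparent: it is exactly what factors $1-4x$ as $(1-2x)^2(1-4u)$. However, it relies on the explicit algebraic solution for $C(x)$. That solution by itself already yields the closed form of $C_n$ and $R=1/4$, which sits awkwardly with the paper's stated aim of not using the closed form.

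Your route needs only the functional equation and formal power series manipulations, with no square roots or branch choices. The chain from Segner's recursion to Touchard's identity to the bound $C_n\le A\,4^n$ therefore never touches the closed form. The cost is a small uniqueness lemma and a verification step in place of a derivation.
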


\begin{proof}

Let us start from the functional equation:
\[
\frac{C(x)-1}{x}=C(x)^2.
\]
Since
\[
\frac{C(x)-1}{x}
=
\sum_{n\ge0} C_{n+1}x^n,
\]
we have
\[
\sum_{n\ge0} C_{n+1}x^n=C(x)^2.
\]
Using the algebraic expression
\[
C(x)=\frac{1-\sqrt{1-4x}}{2x},
\]
we get
\[
C(x)^2
=
\frac{1-2x-\sqrt{1-4x}}{2x^2}.
\]
We have also
\[
\sqrt{1-4x}
=
(1-2x)
\sqrt{1-\frac{4x^2}{(1-2x)^2}}.
\]
Setting
\[
u=\frac{x^2}{(1-2x)^2},
\]
and recalling that
\[
\frac{1-\sqrt{1-4u}}{2u}
=
\sum_{k\ge0} C_k u^k,
\]
we obtain
\[
C(x)^2
=
\sum_{k\ge0}
C_k
\frac{x^{2k}}{(1-2x)^{2k+1}}.
\]
Using
\[
\frac{1}{(1-2x)^{2k+1}}
=
\sum_{m\ge0}
\binom{m+2k}{2k}
2^m x^m,
\]
we get
\[
C(x)^2
=
\sum_{k\ge0}\sum_{m\ge0}
C_k
\binom{m+2k}{2k}
2^m
x^{m+2k}.
\]
Extracting the coefficient of $x^n$ (i.e. $m=n-2k$) yields the Touchard identity
\[
C_{n+1}
=
\sum_{k=0}^{\lfloor n/2\rfloor}
C_k
\binom{n}{2k}
2^{\,n-2k}.
\]
Such an identity was used for instance to derive new integral representations of Catalan numbers \cite{Pain}. Further generalizations have also appeared in the literature, including very recently \cite{Adegoke}.

\end{proof}

\section{Upper exponential bound using Touchard's identity}\label{sec3}

\begin{lemma}
For every $n\ge0$, we have
\[
\sum_{k=0}^{\lfloor n/2\rfloor}
\binom{n}{2k}
=
2^{n-1}.
\]
\end{lemma}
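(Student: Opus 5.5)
The plan is to use the standard parity trick with the binomial theorem. Write
\[
(1+1)^n=\sum_{j=0}^{n}\binom{n}{j},\qquad (1-1)^n=\sum_{j=0}^{n}(-1)^j\binom{n}{j}.
\]
Adding the two expansions, the odd-index terms cancel and each even-index term appears twice. Writing $j=2k$ with $0\le k\le\lfloor n/2\rfloor$, this gives
\[
2\sum_{k=0}^{\lfloor n/2\rfloor}\binom{n}{2k}=2^n+(1-1)^n=2^n+0^n .
\]

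For $n\ge1$ we have $0^n=0$, so dividing by $2$ yields $\sum_{k}\binom{n}{2k}=2^{n-1}$, which is the claimed identity. An alternative, purely combinatorial route for $n\ge1$ is the involution on subsets of $\{1,\dots,n\}$ that toggles the element $1$. It pairs even-sized subsets with odd-sized ones, so exactly half of the $2^n$ subsets have even size.

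The only delicate point is the boundary case $n=0$. There the left-hand side is $\binom{0}{0}=1$, whereas $2^{n-1}=1/2$, reflecting $0^0=1$ in the computation above. So the identity as literally stated holds for $n\ge1$ only. For $n=0$ the correct value is $1$, and uniformly in $n$ one has $\sum_k\binom{n}{2k}=\tfrac12(2^n+[n=0])\le 2^n$.

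In writing the proof I will state the restriction $n\ge1$ explicitly, or record the $n=0$ value separately. Either version suffices for the intended use in Section~\ref{sec3}, where the lemma presumably serves to bound Touchard's sum $\sum_k\binom{n}{2k}2^{n-2k}C_k$ from above for large $n$. Apart from this boundary case, I expect no real obstacle.
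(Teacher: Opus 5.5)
Your proof is the same as the paper's: you add the binomial expansions of $(1+1)^n$ and $(1-1)^n$ so that the odd-index terms cancel. You are also right that the statement as written fails at $n=0$: the left side is $1$, not $1/2$, because the paper's proof tacitly uses $(1-1)^n=0$, which needs $n\ge1$. This is harmless for the upper-bound induction, since $\sum_k\binom{n}{2k}\le 2^n$ already gives $C_{n+1}\le A\,4^n\le A\,4^{n+1}$.
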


\begin{proof}
Separating even and odd terms in
\[
(1+1)^n=\sum_{j=0}^n\binom{n}{j},
\]
and
\[
(1-1)^n=\sum_{j=0}^n(-1)^j\binom{n}{j},
\]
gives the result.

\end{proof}

We will now prove by induction that
\[
C_n \le A~4^n,
\]
for some constant $A>0$. Assume this holds for all $k\le n$. From Touchard's identity, we get, using the induction hypothesis:
\[
C_{n+1}
\le
A~
\sum_{k}
\binom{n}{2k}2^{n-2k}4^k.
\]
Since
\[
2^{n-2k}4^k
=
2^{n-2k}2^{2k}
=
2^n,
\]
we obtain

\[
C_{n+1}
\le
A~2^n
\sum_{k}
\binom{n}{2k},
\]
and hence
\[
C_{n+1}
\le
A~2^n~2^{n-1}
=
A~2^{2n-1}
=
\frac{A}{2}4^n.
\]
Choosing $A$ large enough to absorb the factor $1/2$ for small $n$,
the inequality propagates inductively, and therefore
\begin{equation}\label{unquart}
\limsup_{n\to\infty} C_n^{1/n}\le4.
\end{equation}

\section{Lower exponential bound}\label{sec4}

\begin{lemma}
For every $n\ge1$, the following inequality holds
\[
\binom{2n}{n}
\ge
\frac{4^n}{2n+1}.
\]
\end{lemma}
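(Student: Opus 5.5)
The plan is to compare the central binomial coefficient with the full row sum of the binomial expansion. Write
\[
4^n=(1+1)^{2n}=\sum_{j=0}^{2n}\binom{2n}{j}.
\]
This sum has exactly $2n+1$ terms. If each of them is at most $\binom{2n}{n}$, then $4^n\le(2n+1)\binom{2n}{n}$, which is the claimed inequality. The whole proof therefore reduces to showing that the middle coefficient is the largest entry in row $2n$ of Pascal's triangle.

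To prove this maximality, I will use the ratio of consecutive coefficients:
\[
\frac{\binom{2n}{j+1}}{\binom{2n}{j}}=\frac{2n-j}{j+1}.
\]
This ratio is at least $1$ exactly when $2n-j\ge j+1$, that is, when $j\le n-\tfrac12$, i.e.\ $j\le n-1$. So the sequence $\binom{2n}{j}$ is nondecreasing for $0\le j\le n$. The symmetry $\binom{2n}{j}=\binom{2n}{2n-j}$ then shows that it is nonincreasing for $n\le j\le 2n$. Hence $\binom{2n}{j}\le\binom{2n}{n}$ for every $j$.

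Summing this bound over the $2n+1$ indices and dividing by $2n+1$ gives the lemma. There is no real obstacle here; the only step needing care is the unimodality argument above, and the ratio computation settles it directly.

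Looking ahead, I note that combining the lemma with (\ref{expli}) yields $C_n\ge 4^n/((n+1)(2n+1))$. Taking $n$th roots, the polynomial factor tends to $1$, which gives the matching lower estimate $\limsup C_n^{1/n}\ge4$.
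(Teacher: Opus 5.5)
Your proof is correct and follows the paper's argument exactly: you bound each of the $2n+1$ terms of $(1+1)^{2n}$ by the central coefficient. The paper simply asserts that the central term is maximal, while you justify it via the ratio $\binom{2n}{j+1}/\binom{2n}{j}=(2n-j)/(j+1)$ and symmetry, which fills in that step.
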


\begin{proof}
Since we have
\[
4^n=\sum_{k=0}^{2n}\binom{2n}{k}
\]
and since the central term is maximal, we get
\[
4^n\le(2n+1)\binom{2n}{n}.
\]
\end{proof}
This implies that the Catalan numbers admit a lower bound:
\[
C_n
=
\frac{1}{n+1}\binom{2n}{n}
\ge
\frac{4^n}{(n+1)(2n+1)}.
\]
Taking $n$-th roots yields of both sides of the preceding inequality yields
\[
\limsup_{n\to\infty} C_n^{1/n}\ge4.
\]
Combining upper and lower bounds gives
\[
\limsup_{n\to\infty} C_n^{1/n}=4.
\]
Therefore the radius of convergence of
\[
C(x)=\sum_{n\ge0} C_n x^n
\]
is
\[
R=\frac14,
\]
which is the expected result. However, we precisely did not want to use the explicit form of $C_n$. Let us see in the next section if we can get a lower bound from the Touchard identity.

\begin{lemma}[Lower bound via Touchard]
For all $n \ge 0$, the Catalan numbers satisfy
\[
C_n \ge 2^{\,n-1}.
\]
\end{lemma}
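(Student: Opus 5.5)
The plan is to read the bound directly off Touchard's identity by discarding all terms but one, so no real induction on the size of $C_n$ is needed. The only preliminary fact is that every $C_k$ is nonnegative (indeed positive). This follows from Touchard's identity itself by a trivial induction on $n$. Indeed, $C_0=1>0$, and if $C_0,\dots,C_{\lfloor n/2\rfloor}$ are positive, then $C_{n+1}$ is a sum of products of positive binomial coefficients, positive powers of $2$, and positive $C_k$. This keeps the argument free of the explicit formula (\ref{expli}), which is the point of this section.

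The case $n=0$ is handled separately: $C_0=1\ge 2^{-1}$. For $n\ge 1$ I write $n=m+1$ with $m\ge 0$ and apply Touchard's identity to $C_{m+1}$. All summands $\binom{m}{2k}2^{m-2k}C_k$ are nonnegative by the previous paragraph. I therefore keep only the $k=0$ term, which equals $\binom{m}{0}2^{m}C_0=2^m$. This gives
\[
C_{m+1}\ \ge\ 2^{m}=2^{(m+1)-1},
\]
which is exactly the claim.

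There is essentially no obstacle; the one point requiring care is the nonnegativity of the discarded terms, justified above from the recursion alone. If a slightly sharper statement were wanted (to show the bound is not tight for $n\ge 3$), I would also keep the $k=1$ term $\binom{m}{2}2^{m-2}C_1$. I would remark, though, that $2^{n-1}$ only yields $\liminf C_n^{1/n}\ge 2$. Recovering the optimal lower rate $4$ from Touchard's identity would require feeding a growing lower bound on $C_k$ back into the full sum, which is a separate matter.
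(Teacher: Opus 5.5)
Your proof is correct and is essentially the paper's argument: drop every term of Touchard's identity except $k=0$ to get $C_{m+1}\ge 2^m$. Your separate treatment of $n=0$ and your derivation of the positivity of the $C_k$ from the recursion itself are small points of care that the paper leaves implicit.
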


\begin{proof}
From Touchard's identity
\[
C_{n+1} = \sum_{k=0}^{\lfloor n/2 \rfloor} \binom{n}{2k} 2^{\,n-2k} C_k,
\]
all terms are positive. By keeping only the first term \(k=0\), we have
\[
C_{n+1} \ge \binom{n}{0} 2^{\,n} C_0 = 2^n.
\]
Since \(C_0 = 1\), this gives
\[
C_{n+1} \ge 2^n \quad \Rightarrow \quad C_n \ge 2^{\,n-1}.
\]
\end{proof}

This gives
\begin{equation}\label{undemi}
\limsup_{n\to\infty} C_n^{1/n} \ge 2,
\end{equation}
and thus, combining Eqs.~(\ref{unquart}) and (\ref{undemi}), we obtain
\begin{equation}
    \frac{1}{4}\leq R\leq \frac{1}{2},
\end{equation}
which is more precise than the 1/6 value in Refs. \cite{Villarino} and \cite{Saintcriq}.

\section{Conclusion}

The determination of the radius of convergence of the generating function of the Catalan numbers is often regarded as a problem whose natural resolution relies on the explicit closed formula
\[
C_n=\frac{1}{n+1}\binom{2n}{n},
\]
or on the algebraic expression
\[
C(x)=\frac{1-\sqrt{1-4x}}{2x}.
\]
From this perspective, the value $R=1/4$ appears as a direct consequence of the square-root singularity at $x=1/4$ (see Appendix). However, such an approach depends essentially on prior knowledge of the exact form of $C_n$ or on solving explicitly the quadratic functional equation. 

It is sometimes asserted that no elementary argument, based solely on intrinsic recurrence properties of the Catalan sequence, yields the precise value of the radius. While lower bounds such as $R\geq 1/6$ can be obtained through nontrivial combinatorial or analytic estimates, reaching the optimal value $1/4$ without invoking the closed formula typically requires substantial technical effort.

The purpose of the present work was to show that Touchard’s recurrence provides a natural and effective tool for overcoming this difficulty. By exploiting this identity and combining it with the Cauchy–Hadamard formula, we established that $R\geq 1/4$, the lower bound 1/4 following from a careful control of the even binomial sums appearing in Touchard’s relation. While obtaining 1/4 as an upper bound is difficult from Touchard's identity (while 1/2 can easily be obteined), it can be obtained through classical estimates for the central binomial coefficient. Together, these estimates imply
\[
\limsup_{n\to\infty} C_n^{1/n}=4,
\]
and therefore $R=1/4$. This approach has the advantage of remaining entirely within the framework of recurrence relations and coefficient estimates, without appealing to the explicit algebraic resolution of the generating function. It highlights the strength of convolution-type identities in determining precise exponential growth rates and illustrates how structural combinatorial information can replace closed-form expressions in analytic arguments.

More broadly, the method suggests that similar recurrence-based techniques may be applicable to other algebraic or combinatorially defined sequences whose generating functions satisfy polynomial equations. In this sense, the Catalan case serves as a paradigmatic example of how refined coefficient identities can lead to sharp analytic conclusions.

\appendix

\section{On the Segner recursion relation and the radius of convergence}

It can easily be proven that $C_n \le 4^n$ for all $n$. The statement is true for $n=0$. Let us assume that $C_k \le 4^k$ for $k\le n$. Then
\[
C_{n+1}
=
\sum_{k=0}^n C_k C_{n-k}
\le
\sum_{k=0}^n 4^k 4^{n-k}
=
(n+1)4^n.
\]
For $n\ge 0$, one checks directly that $(n+1)4^n \le 4^{n+1}$. Hence $C_{n+1} \le 4^{n+1}$ and the bound follows by induction. Consequently,
\[
\limsup_{n\to\infty} C_n^{1/n} \le 4,
\]
so that $R \ge \frac14$.

Let us now focus on the localization of the first singularity, and consider the analytic function
\[
F(z,w) = w - 1 - z w^2.
\]
It is clear that the generating function satisfies
\[
F(z, C(z)) = 0.
\]
Near $z=0$, the implicit function theorem applies because
\[
\frac{\partial F}{\partial w}(0,1) = 1 \neq 0.
\]
Hence $C(z)$ is analytic in a neighborhood of $0$. The function can cease to be analytic only at points where
\[
\frac{\partial F}{\partial w}(z,C(z)) = 0,
\]
that is
\[
1 - 2 z C(z) = 0.
\]
Combining this with the functional equation $C(z)=1+zC(z)^2$, we obtain
\[
C(z)=2,
\qquad
z=\frac14.
\]
Therefore the smallest singularity in modulus is located at $z=1/4$.

\end{document}